\documentclass[12pt]{article}
\usepackage{amssymb, amsmath, amsthm, epsf, epsfig, color}

\setlength{\oddsidemargin}{0in}
\setlength{\topmargin}{-.35in}
\addtolength{\textwidth}{1.23in}
\addtolength{\textheight}{1.5in}

\newlength{\originalbase}
\setlength{\originalbase}{\baselineskip}
\newcommand{\spacing}[1]{\setlength{\baselineskip}{#1\originalbase}}
\newcommand{\R}{\mathbb{R}}
\renewcommand{\P}{\mathbb{P}}
\newcommand{\E}{\mathbb{E}}

\begin{document}
\spacing{1.5}

\newtheorem{theorem}{Theorem}[section]
\newtheorem{claim}[]{Claim}
\newtheorem{prop}[theorem]{Proposition}
\newtheorem{remark}[theorem]{Remark}
\newtheorem{lemma}[theorem]{Lemma}
\newtheorem{corollary}[theorem]{Corollary}
\newtheorem{guess}[theorem]{Conjecture}
\newtheorem{conjecture}[theorem]{Conjecture}

\title{Catching the Drunk Robber on a Graph}

\author{Natasha Komarov\thanks{Department of Mathematics, Dartmouth,
Hanover NH 03755-3551, USA; nkom@dartmouth.edu.}
\, and Peter Winkler\thanks{Department of Mathematics, Dartmouth,
Hanover NH 03755-3551, USA; peter.winkler@dartmouth.edu.  Research
supported by NSF grant DMS-0901475.}}

\maketitle

\begin{abstract}
We show that the expected time for a smart ``cop'' to catch a drunk
``robber'' on an $n$-vertex graph is at most $n + {\rm o}(n)$.
More precisely, let $G$ be a simple, connected, undirected graph
with distinguished points $u$ and $v$ among its $n$ vertices.
A cop begins at $u$ and a robber at $v$; they move alternately
from vertex to adjacent vertex.  The robber moves
randomly, according to a simple random walk on $G$; the cop sees
all and moves as she wishes, with the object of ``capturing'' the
robber---that is, occupying the same vertex---in least expected
time.  We show that the cop succeeds in expected time no more than
$n + {\rm o}(n)$. Since there are graphs in which capture time is at least $n - o(n)$, this is roughly best possible. We note also that no function of the diameter can be a bound on capture time.
\end{abstract}

\section{Introduction}

The game of cops and robbers on graphs was introduced independently by
Quilliot \cite{Q} and Nowakowski and Winkler \cite{NW}, and has generated
a great deal of study; see, e.g., \cite{1,2,3,4}.  In the original
formulation a cop and robber move alternately and deliberately, with
full information, from vertex to adjacent vertex on a graph $G$, with
the cop trying to capture the robber and the robber trying to elude
the cop. In this work, all graphs are assumed to be connected, simple (no loops or multiple edges) and undirected. A graph is said to be ``cop-win'' if there is a vertex $u$ such
that for every $v$, the cop beginning at $u$ can capture the robber
beginning at $v$.

In addition to their obvious role in pursuit games, cop-win graphs (which are also
known as ``dismantlable'' graphs) have appeared in diverse places
including statistical physics \cite{BW2}.  In the present work, we
consider a variation suggested \cite{McG} by Ross Churchley of the University of
Victoria, in which the robber is no longer in control of his fate; instead,
at each step he moves to a neighboring vertex chosen uniformly at random.
We may therefore imagine that the robber is in fact a drunk---one who is too far gone to have an objective.

On any graph, the drunk will be caught with probability one, even by a cop who
oscillates on an edge, or moves about randomly; indeed, by any cop who isn't actively
trying to lose.  The only issue is: how long does it take?  The lazy cop will win
in expected time at most $4n^3/27$ (plus lower-order terms), since that is the maximum possible expected
hitting time for a random walk on an $n$-vertex graph \cite{BW};
the same bound applies to the random cop \cite{CTW}.  It is easy to
see that the greedy cop who merely moves toward the drunk at every step can
achieve O$(n^2)$; in fact, we
will show that the greedy cop cannot in general do better.  Our smart
cop, however, gets her man in expected time $n + {\rm o}(n)$.  Note
that when the adversaries play on a lollipop graph consisting of a clique of size $cn^{1/3}$ (for some constant $c \in \R$) with a path of length $n - cn^{1/3}$ attached at one end, with the drunk starting in the clique and the cop starting at the opposite endpoint of the path, the expected capture time will be $n - \Theta (n^{1/3}) = n - {\rm o}(n)$, and we conjecture that this is worst possible.

\section{Preliminaries}
\label{prelims}

In this variation, a ``move'' (as in chess) will consist of a step by the cop followed by a (uniformly random) step by the drunk. Capture or ``arrest'' takes place when the cop lands on the drunk's vertex or vice-versa,
and the capture time $T$ is the number of the move at which this takes place.

Let us consider some examples.  (1) Suppose $G$ is the path $P_n$ on $n$ vertices,
with $u$ and $v$ its endpoints.  Then the cop will (using any of the
algorithms we consider later) move along the path until she reaches the drunk; this
will take expected time about $n - \sqrt{n}$ since a random walk
on a path will on average progress about distance $\sqrt{t}$ in time $t$.

(2) Let $G$ be the complete balanced bipartite graph $K_{\lfloor n/2 \rfloor, \lceil n/2 \rceil}$,
with the cop and the drunk beginning on the same side.  Then the poor cop will find herself always
moving to the opposite side from her quarry until, finally, he accidently runs into her;
since the latter event occurs with probability about $2/n$, arrest takes on average $n/2$ steps.

The reader may feel with some justification that we are being unrealistic in not allowing
the cop to stay put; in example (2), sitting for one move would enable her to catch the drunk on
the next move.  Ultimately, we force the cop to move at each step in order to hold her to
the same constraints as her quarry's, and because it gives us the strongest results.  Our
bounds still apply when the cop, the drunk, or both are allowed to stay put on any move.

Even when the cop is permitted to idle, she cannot expect to catch the drunk in time bounded
by a function of the diameter of $G$.  Example (3), let $G$ be the incidence graph of
a projective plane of order $n$. A projective plane $P$ of order $n$ is a collection of objects called ``points'' and sets of points called ``lines'' satisfying the following conditions:
\begin{enumerate}
\item \label{2points1line} Two points determine a unique line.
\item \label{2lines1point} Two lines intersect in a unique point.
\item \label{n+1points} Every line consists of exactly $n+1$ distinct points.
\item \label{n+1lines} Every point lies on exactly $n+1$ distinct lines.

Furthermore \cite{MHall},
\item $P$ contains exactly $n^2 + n + 1$ distinct points.
\item $P$ contains exactly $n^2 + n + 1$ distinct lines.
\end{enumerate}

Projective planes of order $n$ are known to exist for $n = p^a$ for any prime number $p$ and positive integer $a$ \cite{primepowerPP}. The incidence graph $G$ of $P$ is therefore a graph with $2(n^2+n+1)$ vertices, with adjacency relation $u \sim v$ if $u$ is a point in $P$ and $v$ is a line that goes through $u$, or vice versa. Such graphs have bounded diameter but unbounded expected capture time:
\begin{quote}
{\bf Claim 1.} $diam(G) = 3$.
\begin{proof}
Let $a,b$ be two points in $P$. By condition (\ref{2points1line}) above, $a$ and $b$ both lie on a common line, so $d(a,b) = 2$. If $a,b$ are instead two lines in $P$, then condition (\ref{2lines1point}) says that $a$ and $b$ intersect at a common point. Finally, if $a$ is a point and $b$ is a line in $P$, then either $a$ lies on $b$ and so $d(a,b) = 1$ or there is another point, $c$, which does lie on $b$. But by the previous argument, $d(a,c) = 2$ and so $d(a,b) = 3$.
\end{proof}

{\bf Claim 2.} The girth of $G$ is 6. 
\begin{proof}
Note that $G$ has no odd cycles by the independence of the set of points (and respectively, set of lines). Now assume for sake of contradiction that $G$ contains a cycle of length $4$. Then there are two points $p_1, p_2$ and two lines $\ell_1, \ell_2$ such that $p_1, \ell_1, p_2, \ell_2, p_1$ forms a cycle. But this contradicts condition (\ref{2lines1point}) since $\ell_1$ and $\ell_2$ must intersect in $p_1$ as well as $p_2$. 
\end{proof}

{\bf Claim 3.} $G$ is regular of degree $r = n+1 \approx \displaystyle \sqrt{|V(G)|/2}$. 
\begin{proof}
By conditions (\ref{n+1points}) and  (\ref{n+1lines}).
\end{proof}

{\bf Claim 4.} The expected capture time on $G$ is at least $r$. 
\begin{proof}
When the cop gets to distance 2 of the drunk, he has only one bad move out of $r$; the rest keep him at distance at least 2. (Similarly, if the cop gets to distance 1, bypassing ever being at distance 2, the drunk still has only one bad move out of $r$, the rest of which keep him at distance 1.) Hence the cop's expected capture time cannot be any lower than $r$ (the expected number of independent Bernoulli trials, each with success probability $1/r$, until success is achieved).
\end{proof}
\end{quote}


On the other hand, it is not hard to verify that on any regular graph, the greedy cop---who
minimizes her distance to the drunk at each move---wins in expected time at most linear in $n$.  If $G$ is regular of
degree $r$, its diameter cannot exceed $\frac{3n - r - 3}{r+1}$ \cite{So}.  Since the drunk will
step toward the cop with probability at least $1/r$ at each move, resulting (after her
response) in a decrease of 2 in their distance, the expected capture time is bounded
by $r\cdot diam(G)/2 < 3n/2$.

The linear bound also holds on trees. To see this, we proceed by induction on the size of the tree, $n$. When $n=2$, the capture time is clearly less than $n$ (since the drunk will run into the cop on his first move). Now suppose that on any tree with $t < n$ vertices, the expected capture time is at most $t$, and let $T$ be a tree on $n$ vertices, rooted at $c_0$ (the cop's initial position). For all descendants $v$ of $c_0$, let $T_v$ be the subtree of $T$ consisting of $v$ and all of its descendants. So the game begins on $T = T_{c_0}$, and after the first move, since the drunk cannot get ``behind'' the cop without being caught, the game is being played on $T_{c_1}$ where $c_1$ is the cop's position after one step. (Note that by the greedy strategy, $c_1$ is the unique neighbor of $c_0$ which is on the path from $c_0$ to $r_1$, the drunk's position after he takes his first step.) $|V(T_{c_1})| \leq |V(T_{c_0})| - 1 = n-1$ so by the induction hypothesis, the game takes no more than expected time $n-1$ on $T_{c_1}$ and therefore no more than $n$ on $T$.

For general $G$, one can guarantee only that at a given point in time the drunk will step toward
the cop with probability at least $1/\Delta$, giving a bound of order $n^2$ for the greedy cop.
That may appear to be a gross overestimate, especially in light of the special cases discussed above, but a graph with many high-degree vertices can still have large diameter. For example, consider the following graph. 

\begin{figure}[h!]
\centering
\includegraphics[scale=.8]{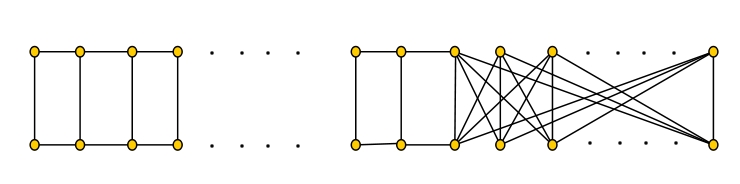}
\caption{The Ladder to the Basement}
\label{ladder}
\end{figure}

The ``ladder'' in this graph consists of two copies of the path $P_{n/4}$ with each pair of corresponding vertices connected by an edge. The ``basement'' consists of a complete bipartite graph, $K_{\lfloor n/4 \rfloor, \lceil n/4 \rceil}$. We begin with the drunk inside the basement, and the cop on the far end of the ladder. While the drunk is meandering inside the basement, the cop---staying true to her goal of minimizing the distance between her and the drunk at each step---is alternating between the two paths. Note that we assume she makes the foolish choice when she is presented with several options by her algorithm.  It takes the drunk $n/4$ moves on average to leave the basement, and each time this occurs, the cop will decrease the distance by 2 by traveling along her current path. Therefore the capture will require an average of about $(n/4)^2/2$ steps.

\section{The Smarter Cop}

\subsection{Intuition}
\label{smartintuition}
As noted in the example of the ``ladder to the basement'' graph in Section~\ref{prelims}, a foolish greedy cop can be foiled by her desire to ``retarget'' too often. That is, since she updates the target vertex (to which she is trying to minimize her distance) at each step, she is made indecisive by an indecisive drunk. One natural solution to this problem would be to walk directly toward the robber's initial position in the basement for several steps before retargeting. Continuing in this way, the cop makes steady progress, ultimately catching the drunk in time less than $n$. 

In general, if a cop and drunk begin at distance $d$ on a graph, and the cop proceeds by retargeting every four steps, then by Lemma~\ref{4lemma} below, it would take $4(4n^{2/3})(d-3)$ moves to get down to distance less than four. Since $d$ can be as large as $n-1$, this would not suffice to yield our promised bound of $n + o(n)$, so the cop must first do something else to get her distance to the drunk down without spending too much time doing so---hence the following four-stage strategy.

	For $i \in [4]$, 
		let $T_i$ be the time spent in Stage $i$ and $D_i$ be the distance between the two players
		at the end of Stage $i$.
	In the first stage, the cop heads directly for the drunk's initial position, $x$, 
		so that $T_1 \le diam(G)$.
	In the meantime, the drunk has gone somewhere else, and so suppose that by the time that the cop
		reaches $x$, the drunk is at $y$. 
	Now we are in Stage 2, and the cop heads for $y$. 
	We show $\E[T_2] = o(n)$. 
	During Stage 3, the cop updates her ``target'' every four steps, and we show that the expected time
		for this stage, $\E[T_3]$, is again bounded by $o(n)$. 
	This stage ends when we are at distance at most three from the drunk. 
	In Stage~4, the cop waits for the drunk to make an error, which happens in expected time at most
		$\Delta$ and results in the capture of the drunk. 
	All together, this cop captures the drunk in expected time $n+o(n)$. 
	We will refer to the progress made by the cop in the first two stages as ``gross progress,'' 
		and in the last two stages as ``fine progress.'' 
	In order to prove the bounds claimed above, it will be beneficial to have a few lemmas.

\subsection{Gross Progress}
	Suppose that the drunk starts on vertex $u$ and the cop starts at $v$. 
	As noted in the set-up of the previous section, in the first stage of the cop's strategy, 
	she is concerned only with getting to $u$ (even if this may not decrease her distance from the drunk at the end of the stage). 
	Clearly the time this takes is equal to $T_1 = d(v,u) \le diam(G)$. 
	We would like to get a bound on $\E[D_1]$, the expected distance between the cop and the drunk 
		at the end of this stage. 
	For that, the following lemma will prove quite useful.

	\begin{lemma}
	\label{keylemma}
		Let $T_{n,t}$ be the distance covered in time $t$ by a random walk on a (connected) graph with 	
			$n$ vertices. 
		Then  $\E[T_{n,t}] < 1 + \sqrt{t}\sqrt{1 + 5 \log n}$.
	\end{lemma}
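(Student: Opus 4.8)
The plan is to control the upper tail of $T_{n,t}$ and then integrate. Write the drunk's trajectory as a simple random walk $X_0, X_1, \dots, X_t$ with $X_0 = x$, so that $T_{n,t} = d(x, X_t)$; I will bound its expectation uniformly over the starting vertex $x$ and over the graph. The heart of the argument is the fact that a random walk cannot travel far in few steps, quantified by the Varopoulos--Carne inequality: the $t$-step transition probability of a reversible walk satisfies $p_t(x,y) \le 2\sqrt{\pi(y)/\pi(x)}\,\exp(-d(x,y)^2/2t)$, where $\pi(v) = \deg(v)/2m$ is the stationary distribution of the simple walk. Since $\pi(y)/\pi(x) = \deg(y)/\deg(x) < n$ for any two vertices of an $n$-vertex graph, this yields the clean, uniform estimate $p_t(x,y) < 2\sqrt{n}\,\exp(-d(x,y)^2/2t)$.

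From here I would read off a sub-Gaussian tail for $T_{n,t}$. Fewer than $n$ vertices $y$ satisfy $d(x,y) \ge k$, and each contributes at most $2\sqrt{n}\,\exp(-k^2/2t)$ to $\P[T_{n,t} \ge k]$, so
\[
\P[T_{n,t} \ge k] \;<\; 2n^{3/2}\exp\!\left(-\frac{k^2}{2t}\right).
\]
This bound exceeds $1$ (hence is vacuous) only for $k$ below the threshold $k_0 := \sqrt{t(1 + 5\log n)}$, at which point $2n^{3/2}\exp(-k^2/2t)$ has already dropped well below $1$; for the small values $k \le k_0$ I simply use $\P[T_{n,t}\ge k] \le 1$.

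Finally I would assemble the expectation through $\E[T_{n,t}] = \sum_{k \ge 1}\P[T_{n,t} \ge k]$, splitting the sum at $k_0$. The terms with $k \le k_0$ contribute at most $k_0 = \sqrt{t}\,\sqrt{1+5\log n}$ by the trivial bound, while the terms with $k > k_0$ form a Gaussian tail $\sum_{k>k_0}2n^{3/2}\exp(-k^2/2t)$; comparing consecutive terms shows this tail is dominated by a geometric series of ratio $\exp(-k_0/t)$ and is therefore of order $t/k_0 = O(\sqrt{t/\log n})$, smaller than the main term $k_0$ by a factor of $\log n$. Because the transition-probability estimate alone would already produce a leading constant near $3$ in place of $5$, the generous choice of $5$ leaves ample room to absorb this lower-order tail, while the additive ``$1+$'' absorbs rounding and the small-$n$ cases; together these give the stated bound $\E[T_{n,t}] < 1 + \sqrt{t}\sqrt{1 + 5\log n}$.

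The main obstacle is the Varopoulos--Carne estimate itself: it is precisely the device that converts the one-dimensional Gaussian decay of simple random walk on $\Z$ (where $\P[S_t \ge k] \le \exp(-k^2/2t)$) into a statement valid on an arbitrary graph, and everything downstream is routine. If a self-contained treatment is preferred, the technical core becomes re-deriving that inequality by expanding $P^t$ in Chebyshev polynomials $T_j(P)$ and exploiting that $\langle \delta_y, T_j(P)\delta_x\rangle = 0$ whenever $j < d(x,y)$; the tail integration, the choice of threshold, and the constant-chasing are then the only remaining steps, and all of these are elementary.
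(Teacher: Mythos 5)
Your tail estimate itself is exactly the paper's key step: Varopoulos--Carne, the uniform bound $\sqrt{\deg(y)/\deg(x)}<\sqrt{n}$, and a union bound over the at most $n-1$ vertices at distance $\ge k$, giving $\P[T_{n,t}\ge k]<Cn^{3/2}e^{-k^2/2t}$ (the paper uses Peyre's form of the inequality, with $C=\sqrt{e}$ in place of your $2$). The gap is in the conversion from tail to expectation. Once you split the layer-cake sum at $k_0=\sqrt{t(1+5\log n)}$, the sub-threshold part already contributes the entire permitted main term $k_0$; to finish, the tail sum must be shown to be at most $1$, not merely ``smaller than $k_0$ by a factor of $\log n$.'' A quantity of order $\sqrt{t/\log n}$ can be huge (for $t\approx n^2/\log n$ it is of order $n/\log n$), and there is no room to absorb it: the target bound carries exactly the constant $5$, and your claim that ``the generous choice of $5$ leaves ample room'' conflates two different split points (the vacuous threshold $\approx\sqrt{t(2\log 2+3\log n)}$ and $k_0$) without making either accounting work.

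The concrete slip is that your geometric-series estimate drops the first-term factor. At $k=k_0$ the tail bound is not $\Theta(1)$ but $2n^{3/2}e^{-k_0^2/2t}=2e^{-1/2}/n$; this factor $1/n$ is precisely what the constant $5$ (rather than $3$) buys. Keeping it, the tail sum is at most $\frac{2e^{-1/2}}{n}\cdot\frac{1}{1-e^{-k_0/t}}\le\frac{4e^{-1/2}\,t}{n\,k_0}$ when $k_0\le t$ (and the tail vanishes otherwise, since the walk covers distance at most $t$). Since one may assume $k_0\le n$ --- otherwise the claimed bound exceeds the diameter and is vacuous --- this is at most $\frac{4e^{-1/2}}{1+5\log n}<1$ for all $n\ge 2$, and your route then closes: $\E[T_{n,t}]\le k_0+1$. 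Note that the paper avoids this bookkeeping entirely: with $c=\sqrt{1+5\log n}$ its tail probability at the single threshold $c\sqrt{t}$ is exactly less than $1/n$ (this is where the constant $\sqrt{e}$ matters; with your constant $2$ this step would give $2e^{-1/2}\approx 1.21$ in place of the additive $1$), and then it simply writes $\E[T_{n,t}]\le \P[T_{n,t}\ge c\sqrt{t}]\cdot n+c\sqrt{t}<1+c\sqrt{t}$, using only that distances in an $n$-vertex graph never exceed $n$.
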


	\begin{proof}
	Let $p^t(x,y)$ be the probability that a random walk that starts at vertex $x$ will be at vertex $y$ 
		in exactly $t$ steps. 
	The Varopoulous-Carne bound \cite{V}, as formulated in \cite{P}, says 
		$$ p^t(x,y) \le \sqrt{e} \sqrt{\frac{deg(y)}{deg(x)}} \exp \left( -\frac{d(x,y)^2}{2t} \right)$$
	where $d(x,y)$ is the graph distance between the two vertices. 
	Therefore, if we consider the random walk $x_0, x_1, \dots, x_t$ on a graph of size $n$ and let 
		$c \in \R$ be any constant, we have the following bound as a corollary of Varopoulos-Carne:
	
	\begin{eqnarray*}
		\P(d(x_0,x_t) \geq c \sqrt{t} ) & = & \sum_{y:d(x_0,y) \geq c\sqrt{t}} p^t(x_0,y) \\
			& \le & \sum_{y:d(x_0,y) \geq c\sqrt{t}} \sqrt{e} \sqrt{\frac{deg(y)}{deg(x_0)}} \exp \left( -\frac{d(x_0,y)^2}{2t} \right) \\	
			& < & \sum_{y:d(x_0,y) \geq c\sqrt{t}} \sqrt{e} \sqrt{n} \exp \left( -\frac{c^2 t}{2t} \right) \\	
			& < & n^{3/2} \exp\left(\frac{1-c^2}{2} \right)
	\end{eqnarray*} 

	Letting $c = \sqrt{1+5 \log n}$ therefore yields that
		$\P(d(x_0,x_t) \geq \sqrt{1+5 \log n} \, \sqrt{t} ) < \displaystyle \frac{1}{n}$. \\

	Note that $\E[d(x_0,x_t)] \le pn + (1-p)c \sqrt{t}$, where $p = \P(d(x_0,x_t)  \geq c \sqrt{t} )$, 
		so we have

	\begin{eqnarray*}
		\E[d(x_0,x_t)] 	& \le & \frac{1}{n} n  + c \sqrt{t} \\
						& = & 1 + \sqrt{t} \sqrt{1+5 \log n} 
	\end{eqnarray*}

	as desired.
	\end{proof}

	This bound is not tight, but it will be good enough to give us the $o(n)$ bound we seek on $\E[T_2]$.
	
	Recall that $D_1$ is the distance between the two players at the end of Stage 1. 
	Note that this is equivalent to the distance between the drunk's initial position and his position 
		at the end of Stage 1.
	We have the following immediate corollary of Lemma~\ref{keylemma}.

	\begin{corollary}
	\label{keycor}
		$\E[D_1] \le  1 + \sqrt{n} \sqrt{1 + 5\log n}$.
	\end{corollary}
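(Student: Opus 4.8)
The plan is to read the corollary off directly from Lemma~\ref{keylemma}, the only real work being to identify $D_1$ as the distance traversed by a random walk over a controllable number of steps. First I would recall the dynamics of Stage~1: the cop ignores the drunk's wanderings and marches straight to the drunk's starting vertex $u$, so the stage lasts exactly $T_1 = d(v,u)$ moves, a \emph{deterministic} quantity depending only on the two starting positions. During these $T_1$ moves the drunk performs a simple random walk $x_0, x_1, \dots, x_{T_1}$ with $x_0 = u$. As the text observes, the cop occupies $u = x_0$ at the end of the stage, so $D_1 = d(x_0, x_{T_1})$ is precisely the distance covered by the drunk's walk in time $T_1$; in the notation of Lemma~\ref{keylemma}, $D_1 = T_{n,\,T_1}$.

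Next I would apply the lemma with $t = T_1$ to obtain $\E[D_1] = \E[T_{n,T_1}] < 1 + \sqrt{T_1}\,\sqrt{1 + 5\log n}$. It then remains only to replace $T_1$ by $n$. Since $T_1 = d(v,u)$ cannot exceed the diameter of a connected graph on $n$ vertices, we have $T_1 \le diam(G) \le n-1 < n$. The right-hand side $1 + \sqrt{t}\,\sqrt{1 + 5\log n}$ is increasing in $t$, so substituting $T_1 < n$ yields $\E[D_1] < 1 + \sqrt{n}\,\sqrt{1 + 5\log n}$, exactly as claimed.

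The argument is essentially immediate once one notices that $T_1$ is non-random; the only point requiring (minor) care is this determinism, since Lemma~\ref{keylemma} is stated for a \emph{fixed} number of steps $t$. Were the number of steps itself random, one would have to be cautious when passing to expectations (e.g.\ via a conditioning or Wald-type argument). Here, however, Stage~1 runs for the fixed length $d(v,u)$, so no such complication arises, and the monotonicity of the bound in $t$ finishes the proof. I therefore do not anticipate any genuine obstacle.
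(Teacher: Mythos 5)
Your proposal is correct and follows exactly the reasoning the paper treats as immediate: $D_1$ is the distance traversed by the drunk's walk over the deterministic duration $T_1 = d(v,u) \le diam(G) < n$, so Lemma~\ref{keylemma} applied with $t = T_1$, together with monotonicity of the bound in $t$, gives the claim. Your added remark that $T_1$ is non-random (so no Wald-type care is needed) is a worthwhile clarification of why the corollary really is immediate, but it is the same argument.
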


	Now the cop enters Stage 2. 
	We would like to bound $\E[D_2]$.
	Note that this is equivalent to the expected distance traveled by the drunk in Stage 2. 
	
	\begin{corollary}
		$\E[D_2] < (5 \log n)^{3/4} n^{1/4}$
	\end{corollary}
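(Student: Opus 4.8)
The plan is to exploit the fact that Stage~2 is the exact mirror of Stage~1. By construction the cop now walks toward the fixed target $y$ (the drunk's location at the end of Stage~1), and since $y$ sits at distance $D_1$ from the cop's current position $u$, this walk takes exactly $T_2 = D_1$ moves. During those moves the drunk performs a random walk started from $y$, and since the cop ends Stage~2 standing on $y$, the quantity $D_2$ is precisely the distance the drunk's walk has covered in $T_2 = D_1$ steps. That is exactly the object Lemma~\ref{keylemma} controls, so I would apply it with the \emph{random} time $t = D_1$.

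First I would condition on $D_1$ and invoke Lemma~\ref{keylemma} to get $\E[D_2 \mid D_1] < 1 + \sqrt{D_1}\,\sqrt{1+5\log n}$. The only delicacy is that the elapsed time $T_2 = D_1$ is itself a random variable rather than a fixed horizon, so the estimate must be read as a conditional expectation and then averaged; this is legitimate because, conditional on the first-stage trajectory, the second-stage walk is a fresh random walk (by the Markov property) to which the lemma's uniform-in-starting-vertex bound applies verbatim. Taking expectations and pulling the constant factor out gives $\E[D_2] < 1 + \sqrt{1+5\log n}\,\E[\sqrt{D_1}]$.

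Next I would tame the awkward term $\E[\sqrt{D_1}]$ by Jensen's inequality: since $x \mapsto \sqrt{x}$ is concave, $\E[\sqrt{D_1}] \le \sqrt{\E[D_1]}$, and Corollary~\ref{keycor} then supplies $\E[D_1] \le 1 + \sqrt{n}\,\sqrt{1+5\log n}$. Substituting yields
$$\E[D_2] < 1 + \sqrt{1+5\log n}\,\sqrt{\,1 + \sqrt{n}\,\sqrt{1+5\log n}\,}.$$
The remaining work is purely arithmetic: for large $n$ the additive constants are negligible, the dominant term inside the inner root is $\sqrt{n}\,\sqrt{1+5\log n}$, and so the right-hand side is asymptotic to $\sqrt{1+5\log n}\cdot(1+5\log n)^{1/4}\,n^{1/4} = (1+5\log n)^{3/4}\,n^{1/4}$, which I would bound above by $(5\log n)^{3/4}\,n^{1/4}$.

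I do not expect a serious obstacle here, since this really is a corollary. The one point that needs care is the composition of the two stochastic stages: because the \emph{duration} of Stage~2 is the random output $D_1$ of Stage~1, the estimate cannot be a single clean application of Lemma~\ref{keylemma} but must pass through a conditional expectation followed by Jensen's inequality on $\sqrt{D_1}$. The only place a careless slip would alter the stated bound is the exponent bookkeeping — the $\tfrac12$ from the outer $\sqrt{\log}$ factor plus the $\tfrac14$ coming from $\sqrt{\E[D_1]}$ combining to $\tfrac34$, and the interior $n^{1/2}$ becoming $n^{1/4}$ — so I would double-check that combination explicitly.
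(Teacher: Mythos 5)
Your proposal is correct and follows essentially the same route as the paper: condition on $D_1$ (which is exactly the Stage-2 duration), apply Lemma~\ref{keylemma}, use Jensen's inequality for the concave square root to replace $\E[\sqrt{D_1}]$ by $\sqrt{\E[D_1]}$, invoke Corollary~\ref{keycor}, and finish with arithmetic. The one blemish---asserting that $(1+5\log n)^{3/4}n^{1/4}$ can be bounded above by $(5\log n)^{3/4}n^{1/4}$, which is backwards since $1+5\log n > 5\log n$---is shared by the paper's own final inequality; it is harmless for the downstream $o(n)$ conclusion, but strictly speaking both arguments deliver only $\E[D_2] = O\bigl((\log n)^{3/4}n^{1/4}\bigr)$ rather than the stated constant.
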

	
	\begin{proof}
		Using Lemma~\ref{keylemma}, Jensen's inequality for concave functions, and Corollary~\ref{keycor}, we get

		\begin{eqnarray*}
			\E[D_2] &\le & \sum_{k=0}^n \P(D_1=k) (1 + \sqrt{k} \sqrt{1 + 5\log n}) \\
					&=& 1 + \sqrt{1 + 5 \log n} \E [\sqrt{D_1}] \\
					& \le & 1 + \sqrt{1 + 5\log n} \sqrt{\E[D_1]} \\
					& \le & 1 + \sqrt{1 + 5\log n} \sqrt{1+\sqrt{n} \sqrt{1 + 5\log n}}\\
					& < & (5 \log n)^{3/4} n^{1/4}
		\end{eqnarray*}
	\end{proof}
	
	Now we are done with the ``gross progress'' that the cop makes in Stages 1 and 2. 
	Note that the total expected time to complete these two stages is bounded by 
	$$\E[T_1] + \E[T_2] \le diam(G) + 1 + \sqrt{n} \sqrt{1 + 5\log n}.$$

\subsection{Fine Progress}
	At the conclusion of stage 2, the cop's approach changes. 
	Now she {\bf retargets} every 4 moves. We make this notion precise in the following manner.

	For each integer $j \geq 1$ let $x_j, y_j$ be the drunk's and cop's positions, respectively, at time
		$j$ (with it being the drunk's turn to move).
	Then in Stage 3, while $d(x_j,y_{j-1}) \geq 4$, for all $j$ of the form $4i+1$ for some $i \geq 0$,
	the cop chooses $x_{4i+1}$ as her {\bf target} and proceeds along a geodesic toward that target for
	the next four steps. 
	Consequently, the cop's target changes every 4 moves, so that for each integer $i \geq 0$, she has
		target $x_{4i+1}$ at times $4i+1, 4i+2$, $4i + 3$, and $4i+4$.
	If at time $j = 4i+1$, $d(x_j,y_{j-1}) < 4$, Stage 3 terminates and the cop's strategy moves into
		Stage 4, which will be described after the following lemma.

	\begin{lemma}
	\label{4lemma}
		Let $G$ be any graph and let $x_0 \in V(G)$ be any vertex in $G$. 
		Let $\{x_0, x_1, x_2, \dots \}$ be any random walk on $G$ beginning at $x_0$. 
		Then $\P(d(x_0,x_4) < 4) \geq 1/s$, where $s = 4n^{2/3}$.
	\end{lemma}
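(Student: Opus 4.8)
The plan is to reduce the statement to a fact about \emph{outward} steps. Since one step changes $d(x_0,\cdot)$ by at most $1$, the walk can have $d(x_0,x_4)=4$ only if $d(x_0,x_j)=j$ for every $j\le 4$, i.e.\ only if each of the four steps strictly increases the distance to $x_0$. Hence it is enough to bound the probability of this ``ballistic'' event by $1-\frac{1}{4n^{2/3}}$. Throughout write $a_k=|\{v:d(x_0,v)=k\}|$, so $a_1=deg(x_0)$ and $\sum_k a_k=n$, and call a step a \emph{retrace} if the walk returns to the vertex it occupied two steps earlier; a retrace strictly decreases the distance to $x_0$, so it certifies the event $d(x_0,x_4)<4$, and from a vertex $x_j$ it occurs with conditional probability $1/deg(x_j)$.

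First I would dispose of the case in which the walk is likely to meet a low-degree vertex early. Fix the threshold $m=2n^{2/3}$ and let $\tau$ be the first time $j\in\{1,2,3\}$ with $deg(x_j)\le m$. Retracing at step $\tau+1$ is non-ballistic and has conditional probability at least $1/m$, so $\P\big(d(x_0,x_4)<4\big)\ge \frac1m\,\P(\tau\le 3)$. Thus if $\P(\exists\,j\le 3:\ deg(x_j)\le m)\ge\frac12$ we are done, since $\frac{1}{2m}=\frac{1}{4n^{2/3}}$. A clean companion bound for intuition: by Cauchy--Schwarz $\P(x_2=x_0)\ge deg(x_0)\big/\sum_{w\sim x_0}deg(w)$, and since every neighbour of $x_0$ has all its neighbours within distance $2$ we have $deg(w)\le a_1+a_2$, whence $\P(x_2=x_0)\ge \frac{1}{a_1+a_2}$; this already settles every graph with $a_1+a_2\le 4n^{2/3}$.

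The remaining, and genuinely hard, case is the complementary one: with probability exceeding $\frac12$ all of $x_1,x_2,x_3$ have degree larger than $m$. Here both handles above are weak---indeed $deg(x_1)\le a_1+a_2$ forces $a_1+a_2>m$, exactly the regime where $\frac{1}{a_1+a_2}$ is small, and a single retrace is too cheap to help. The plan for this case is a volume/edge-counting argument: an outward step from a degree-$>m$ vertex in shell $L_j$ must land in $L_{j+1}$, so four consecutive outward steps occurring with probability near $1$ would force the successive shells $L_1,L_2,L_3,L_4$ all to be large, while $\sum_k a_k\le n$ limits how large they can simultaneously be; quantifying this through the between-shell edge counts $\sum_{w\sim x_0}deg(w)=a_1+2\,e(L_1)+e(L_1,L_2)$ (and their analogues one and two shells out) should bound the ballistic probability below $1$ by the required $\frac{1}{4n^{2/3}}$. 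The main obstacle is precisely to make this last step rigorous: a high-degree vertex can have nearly all of its neighbours pointing outward (a near-pendant entry into a huge shell), so the bound cannot come from any one step and must be wrung out of the global budget $\sum_k a_k\le n$. The threshold $m\asymp n^{2/3}$ is forced by balancing the cheap-retrace bound $\frac{1}{2m}$ against this bounded-volume bound, and it is this balance that produces both the exponent $2/3$ and the constant $4$.
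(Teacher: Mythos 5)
Your reduction to the ``ballistic'' event and your retrace device are sound, and your Case A is a complete argument: if with probability at least $1/2$ some $x_j$, $j\le 3$, has degree at most $m=2n^{2/3}$, then a retrace at the next step yields $\P(d(x_0,x_4)<4)\ge \frac{1}{2m}=1/s$. (Your Cauchy--Schwarz aside, $\P(x_2=x_0)\ge 1/(a_1+a_2)$, is also correct.) The problem is Case B, which you yourself flag as ``genuinely hard'': it is not a proof but a plan, and the plan as stated does not go through. The assertion that four near-certain outward steps ``force the successive shells $L_1,\dots,L_4$ all to be large'' is false: distinct high-degree vertices of $L_2$ may share their out-neighbours, so no shell beyond $L_2$ need be large, and the budget $\sum_k a_k\le n$ applied to shell sizes alone yields no contradiction. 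What rescues the argument is precisely what sharing of out-neighbours creates: vertices of $L_3$ with large \emph{in-degree}. Since every degree is less than $n$, a vertex $w\in L_3$ with in-degree at least $n^{1/3}$ gives a backward-step probability of at least $n^{1/3}/n=n^{-2/3}$; so the real task is to show that, when all the cheap stall bounds fail, the walk sits at time $3$ on such a vertex with conditional probability bounded below by a constant. That quantitative mechanism---an in-degree threshold combined with a double count of the edges between the second and third shells---is the heart of the lemma, and it is absent from your proposal.

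For comparison, this is exactly where the work in the paper's proof lives. Arguing by contradiction from $p_G<1/s$, the paper first performs two edge-deletion reductions (its Claims 1 and 2): make $x_0$ have a unique neighbour, and delete all edges except those joining consecutive shells $A_{k-1},A_k$ for $k\in[4]$; both deletions can only decrease the stall probability, and they make the walk's hitting distribution on $A_2$ and $A_3$ tractable (e.g., $x_2$ becomes uniform on $A_2$). Then the counting: smallness of the earlier stall probabilities forces $a_2\ge s$ and average out-degree of $A_2$ greater than $s-1$, hence more than $s(s-1)\approx 16n^{4/3}$ edges from $A_2$ into $A_3$; on the other hand the set $C\subseteq A_3$ of vertices with in-degree below $n^{1/3}$ can absorb fewer than $n\cdot n^{1/3}=n^{4/3}$ of them. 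A further edge count on the set $B$ of $A_2$-vertices sending more than half of their out-edges into $C$ shows $|B|<a_2/2$, whence $\P(x_3\notin C \mid x_3\in A_3)>1/4$, and the step-4 stall probability exceeds $\frac14\cdot n^{1/3}/n = 1/s$, the desired contradiction. Your threshold heuristic ($m\asymp n^{2/3}$ balancing a retrace against a volume bound) is the right intuition, and your Case A could in principle be grafted onto such an argument, but without the reductions and the in-degree/double-counting step the proof does not close; note too that your Case B would additionally require handling the conditioning on the high-degree event (which is correlated with the trajectory), a complication the paper's deletion arguments avoid entirely.
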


Before we prove this lemma, note that we could not get away with looking at the first three steps of a random walk. That is, we could not get a useful bound for $\P(d(x_0,x_3) < 3)$. Consider the following example: we have a graph $G$ with a vertex $x_0$. Let $A_k$ be the set of vertices at distance $k$ from $x_0$. Suppose that $G$ looks like Figure~\ref{counterexample-3lemma}. That is in $G$, $|A_1| = 1$ and $|A_2| = |A_3| = \displaystyle \frac{n-2}{2}$. Call any step by the random walker that guarantees $d(x_0, x_3) < 3$ a ``stall.'' Then the probability of a stall occurring at the second step is $\displaystyle \frac{1}{(n-2)/2+1} = \frac{2}{n}$, and the probability of a stall occurring at the third step is $\displaystyle \left (1 - \frac{2}{n} \right)\left(\frac{2}{n}\right)$ since for each vertex in $A_2$ and $A_3$, there is one edge on the path toward $x_0$ and $\displaystyle \frac{n-2}{2}$ edges leading farther away from $x_0$. So then $\P(d(x_0,x_3) < 3) = \displaystyle  \frac{2}{n} + \left(1 - \frac{2}{n}\right)\frac{2}{n} < \frac{4}{n}$.

\begin{figure}[h!]
\centering
\includegraphics[scale=.8]{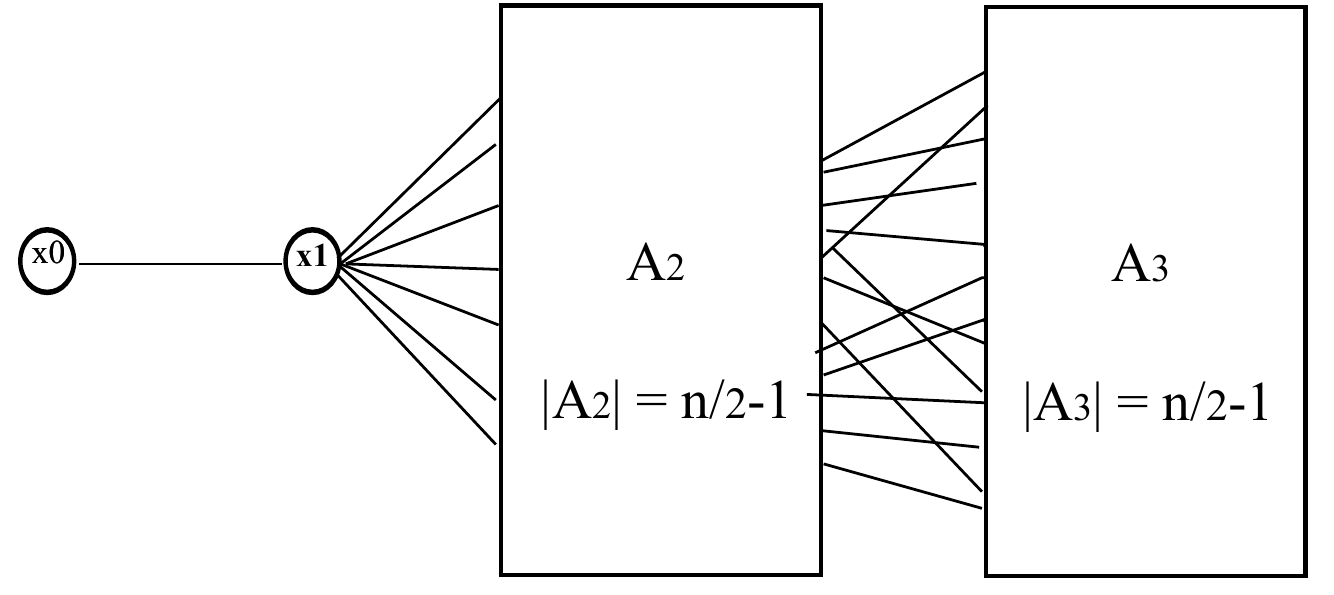}
\caption{At least 4 steps are required to secure a useful bound on a random walk's progress.}
\label{counterexample-3lemma}
\end{figure}

We now return to the proof of Lemma~\ref{4lemma}.

	\begin{proof}
	We proceed by assuming a graph $G$ and a vertex $x_0 \in V(G)$ are such that there is a random walk
		$\{x_0, x_1, \dots \}$ with the property $\P(x_0,x_4) < 4) < 1/s$, and we shall derive a
		contradiction.

	Let $A_k$ be the set of vertices at distance $k$ from $x_0$, and let $a_k = |A_k|$ for all $k$. 
	We adapt the terms {\bf in-degree} and {\bf out-degree} to mean the following: 

		Let $v \in A_k$. 
		Then the in-degree of $v$ is $\deg^-(v) = |N_G(v) \cap A_{k-1}|$ and 
			the out-degree of $v$ is $\deg^+(v) = |N_G(v) \cap A_{k+1}|$. 
	We will use the notation $p_G$ for the quantity under investigation, $\P(d(x_0,x_4) < 4)$, 
		and for a vertex $v \in V(G)$, we define $p_k(v)$ to be the quantity 
		$\P(d(x_0,x_4) < 4 | x_k = v)$. 
	Note that $p_0(x_0) = p_G$ and $p_k(v) = 1$ if $v \in A_j$ for some $j < k$. 
	Finally, we call any step by the random walker that guarantees $d(x_0, x_4) < 4$ a ``stall.''

	We will break this proof into several statements.

\begin{claim}
\label{c1}
	Let $G'$ be the graph defined by removing all edges between $x_0$ and all but one vertex, $x_1$, 
	where $p_1(x_1) = \displaystyle \min_{v \in A_1} p_1(v)$. Then $p_{G'} \le p_G$. 
\end{claim}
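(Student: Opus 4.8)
The plan is to peel off the (possibly forced) first step using the Markov property, and then to separate the two effects of deleting the edges at $x_0$—the change in distances and the change in transition probabilities—so that each can be handled on its own. Throughout I write $\P_H$ for the law of the walk run in a graph $H$, and $p_k^H$ for the quantity $p_k$ computed in $H$.

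First I would average over the first step. Every neighbour of $x_0$ lies in $A_1$, so in $G$ the first step is uniform on $A_1$ and $p_G = \frac{1}{a_1}\sum_{v\in A_1} p_1(v) \ge \min_{v\in A_1} p_1(v) = p_1(x_1)$. In $G'$ the vertex $x_0$ has degree $1$, so its first step is forced to $x_1$ and therefore $p_{G'} = p_1^{G'}(x_1)$. It thus suffices to prove $p_1^{G'}(x_1) \le p_1^G(x_1)$, since the two displayed facts then chain to give $p_{G'} \le p_G$.

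Next I would dispose of the change in distances. Deleting edges can only increase distances to $x_0$, so $d_{G'}(x_0,\cdot) \ge d_G(x_0,\cdot)$ pointwise, and hence along any trajectory $\{d_{G'}(x_0,x_4)<4\} \subseteq \{d_G(x_0,x_4)<4\}$. Running the $G'$-walk this gives $p_1^{G'}(x_1) \le g_1^{G'}(x_1)$, where I set $g_k^H(v) := \P_H(d_G(x_0,x_4)<4 \mid x_k=v)$—the same event, now measured with the fixed $G$-distance, under the $H$-walk. This trades the moving target (the $G'$-distance) for one fixed event, so the remaining work is a comparison of the two \emph{laws} on a single event. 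I would then compare the laws by backward induction on $k$, proving $g_k^{G'}(v) \le g_k^G(v)$ for all $k,v$. At $k=4$ both sides equal the indicator of $d_G(x_0,v)<4$; and whenever $d_G(x_0,v)<k$ the event is already forced, so both equal $1$. In particular this handles $v=x_0$ for every $k\ge 1$, the key point being that the altered transition out of $x_0$ is irrelevant because reaching $x_0$ already determines a stall. For $v\in A_k^G$ the two walks have identical neighbourhoods and degrees except when $v\in A_1\setminus\{x_1\}$, which forces $k=1$; there the edge to $x_0$ is gone, and with $d=\deg_G(v)$ and $S=\sum_{w\sim v,\,w\ne x_0} g_2^G(w)\le d-1$ the inductive step (after using the hypothesis to pass from $g_2^{G'}$ to $g_2^G$) reduces to the elementary inequality $\frac{S}{d-1}\le\frac{1+S}{d}$. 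Evaluating at $k=1$, $v=x_1$ yields $g_1^{G'}(x_1)\le g_1^G(x_1)=p_1^G(x_1)$, which with the metric step finishes the claim.

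The step I expect to be the main obstacle is the conceptual one of realizing that the edge deletion must be decoupled into its metric consequence (the global growth of distances, absorbed by the deterministic inclusion coming from $d_{G'}\ge d_G$) and its probabilistic consequence (the changed transitions at $x_0$ and at $A_1\setminus\{x_1\}$, controlled by the backward induction on the single fixed event). Attacking the two at once—for instance by trying to couple the full trajectories directly—is what makes the statement look hard, since the first steps in $G$ and $G'$ cannot be coupled and the layers $A_k$ themselves move; once the two effects are separated, each piece is routine.
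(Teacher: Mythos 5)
Your proof is correct, and its top-level skeleton is the same as the paper's: in $G'$ the first step is forced to $x_1$, and the minimum $p_1(x_1)$ is at most the average $\frac{1}{a_1}\sum_{v\in A_1}p_1(v)=p_G$. The difference lies in how the forced first step is connected to $p_{G'}$. The paper simply asserts $p_{G'}=p_1(x_1)$, silently identifying the probability computed in $G'$ (with $G'$-distances and $G'$-transition probabilities) with the conditional probability $p_1(x_1)$ computed in $G$. As you correctly recognize, this is not literally an equality: deleting the edges at $x_0$ both increases distances to $x_0$ and changes the walk's law at the vertices of $A_1\setminus\{x_1\}$, and in general only the inequality $p_{G'}\le p_1^G(x_1)$ holds (which is all the claim needs). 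Your decoupling supplies exactly the justification the paper omits: the deterministic inclusion $\{d_{G'}(x_0,x_4)<4\}\subseteq\{d_G(x_0,x_4)<4\}$ coming from monotonicity of distances under edge deletion, and then the backward induction comparing the two laws on the single fixed event, where hitting $x_0$ forces a stall (so the altered transition at $x_0$ is harmless) and the inequality $S/(d-1)\le(1+S)/d$ handles the re-normalized transitions at $A_1\setminus\{x_1\}$ (whose deleted edge pointed to a vertex where the event is certain, so its removal can only hurt). In short, yours is the same argument made rigorous, at the cost of length; what it buys is that the claim is proved for the graph $G'$ as actually defined, not for an informal ``conditioned'' version of $G$. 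One small point of hygiene: a vertex of $A_1\setminus\{x_1\}$ whose only $G$-neighbor was $x_0$ becomes isolated in $G'$, so its transition law is undefined; since such vertices are unreachable by the $G'$-walk they never enter the expansion of $p_{G'}$, but your induction ``for all $v$'' should either exclude them or note this explicitly.
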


\begin{proof} 
	Since $p_G < 1/s$, there must exist a vertex $v \in A_1$ with $p_1(v) < 1/s$. 
	Choose $x_1$ such that $p_1(x_1) = \displaystyle \min_{v \in A_1} p_1(v)$ and define $G'$ as in the statement of the claim. 
	Note that $p_{G'} = p_1(x_1) \le \displaystyle \frac{1}{a_1} \sum_{v \in A_1 \subseteq V(G)} p_1(v) = p_G$.
\end{proof}

\bigskip


\begin{claim}
\label{c2}
 
	Let $G''$ be the induced subgraph of $G'$ with $V(G'') = \displaystyle V(G') - \bigcup_{k > 4} A_k$ and with all edges removed except those that are between a vertex in $A_{k-1}$ and a vertex in $A_k$ for $k \in [4]$. Then $p_{G''} \le p_{G'}$.
\end{claim}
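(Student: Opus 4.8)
The plan is to exploit the rigidity of a four-step walk. Because every edge of $G$ joins two vertices of the same layer $A_k$ or of consecutive layers $A_{k-1},A_k$, and because $G'$ and $G''$ are obtained from $G$ only by \emph{deleting} edges, every step of the walk changes the layer index by at most one. Hence a walk of length four starting at $x_0\in A_0$ can land in $A_4$ at time $4$ only by taking a strictly outward step (from $A_i$ to $A_{i+1}$) on each of its four moves. Writing $E$ for this ``straight-out'' event, we have $1-p_{G'}=\P_{G'}(E)$ and $1-p_{G''}=\P_{G''}(E)$, so the claim $p_{G''}\le p_{G'}$ is equivalent to $\P_{G''}(E)\ge\P_{G'}(E)$: deleting the horizontal edges and the far-away vertices can only make it \emph{easier} for the walk to escape all the way out to $A_4$.

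To make this precise I would compute $\P(E)$ as a sum over straight-out trajectories. Each trajectory $x_0=v_0,v_1,v_2,v_3,v_4$ with $v_i\in A_i$ and $v_{i+1}\sim v_i$ contributes probability $\prod_{i=0}^{3}1/\deg(v_i)$, and these trajectories are exactly the outcomes comprising $E$, so $\P(E)=\sum_{\text{straight-out}}\prod_{i=0}^{3}1/\deg(v_i)$. The first step is forced to $x_1$ in both graphs: in $G'$ the vertex $x_0$ has degree one by the construction in Claim~\ref{c1}, and $G''\subseteq G'$ retains that single edge. The crucial structural point is that $G''$ keeps every edge joining $A_{k-1}$ to $A_k$ for $k\in[4]$; therefore the collection of straight-out trajectories is \emph{identical} in $G'$ and $G''$, and for each interior vertex $v_i$ (with $i\le 3$) the out-neighbourhood $N_G(v_i)\cap A_{i+1}$ is unchanged, so $\deg^+(v_i)$ is preserved.

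What does change is the denominator. Passing from $G'$ to $G''$ deletes the horizontal edges at each $v_i$ (its neighbours lying within the same layer $A_i$) and, for vertices of $A_4$, the edges leading into $\bigcup_{k>4}A_k$. Hence $\deg_{G''}(v_i)\le\deg_{G'}(v_i)$ for every vertex $v_i$ that occurs as a non-terminal vertex of a straight-out trajectory, while the terminal vertex $v_4\in A_4$ never enters the product. Term by term we therefore get $\prod_{i=0}^{3}1/\deg_{G''}(v_i)\ge\prod_{i=0}^{3}1/\deg_{G'}(v_i)$, and summing over the common index set yields $\P_{G''}(E)\ge\P_{G'}(E)$, i.e. $p_{G''}\le p_{G'}$. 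One can package the same computation as a backward induction on the layer index, putting $q_4\equiv 1$ and $q_k(v)=\frac{1}{\deg(v)}\sum_{w\in A_{k+1},\,w\sim v}q_{k+1}(w)$, where $q_k(v)=\P(\text{steps }k{+}1,\dots,4\text{ all outward}\mid x_k=v)$; edge deletion shrinks each $\deg(v)$ while leaving the summation range fixed, so each $q_k$ can only increase, and $q_0(x_0)=\P(E)$ gives the result.

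The only real obstacle is a bookkeeping/interpretation point rather than a deep one: I must confirm that the modifications leave $\deg^+$ and the forced first step untouched while genuinely shrinking the total degrees, and—more subtly—I must keep the partition $\{A_k\}$ fixed as the \emph{original} $G$-layering throughout. Since we are only deleting edges, graph distances in $G'$ and $G''$ can grow, so the event ``reach $A_4$ in four outward steps'' must be read relative to this fixed layering rather than relative to distances recomputed inside the smaller graphs; note that in $G''$ the layering does coincide with $G''$-distance (each vertex of $A_k$ still has an in-neighbour in $A_{k-1}$, and no shortcuts are created), so the two readings agree there. Once $p$ is consistently interpreted as the probability of \emph{failing} this fixed straight-out event, the monotonicity under edge deletion is immediate from the product (equivalently, recursive) formula.
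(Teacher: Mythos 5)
Your argument is correct and is essentially the paper's own proof in different packaging: the paper's displayed nested-sum formula for $p_{G''}$ is exactly your sum over straight-out trajectories, and its per-vertex comparison of $\tfrac{q+t}{q+r+t}$ with $\tfrac{q}{q+r}$ is your observation that deleting horizontal edges (and the vertices beyond $A_4$) shrinks denominators while leaving the outward steps and the forced first step intact; phrasing things via the complement event $E$ and the backward recursion $q_k$ just makes the monotonicity transparent. One parenthetical remark in your last paragraph is false, although you never actually use it: in $G''$ the layering does \emph{not} coincide with $G''$-distance, because every vertex of $A_1\setminus\{x_1\}$ already lost its only in-neighbour, namely $x_0$, in the passage to $G'$; such vertices---and any vertex of $A_2$ whose in-neighbours all lie in $A_1\setminus\{x_1\}$---sit at $G''$-distance at least $3$, possibly $\infty$, from $x_0$. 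This costs you nothing, because your main line consistently interprets $p_{G'}$ and $p_{G''}$ relative to the fixed $G$-layering $\{A_k\}$, which is also the reading the paper implicitly uses (it is what makes $p_{G'}=p_1(x_1)$ in Claim~\ref{c1}, and the later ``stall'' computations in $G''$, correct); once that false aside is struck, your proof stands as written.
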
  

\begin{proof}
	Let $\hat{G'}$ be the induced subgraph of $G'$ on the vertices $V(G') - \displaystyle \bigcup_{k>4} A_k$ for $k > 4$. 
	Then since $\P(x_t \in A_k) = 0$ when $t \le 4$ and $k \geq 5$ (so in particular, $p_{\hat{G'}}$ and $p_{G'}$ depend only on the first four steps of a random walk originating at $x_0$), we have that $p_{\hat{G'}} = p_{G'}$.

	Let $k \in [4]$ and let $v \in A_k$ be a vertex in $V(\hat{G'})$ with $N_{\hat{G'}}(v) \cap A_k \neq \emptyset$. 
	If no such vertex exists then $\hat{G'} = G''$. 
	Otherwise, let $\deg^-(v) = q, \deg^+(v) = r,$ and $|N_{\hat{G'}}(v) \cap A_k| = t >0$. $\displaystyle  p_k(v) \geq \frac{q+t}{q+r+t}$. Removing the $t$ vertices in $N_{\hat{G'}}(v) \cap A_k$ decreases $p_k(v)$ to $\displaystyle \frac{q}{q+r}$. Now let $G''$ be derived from $\hat{G'}$ by removing all edges except for those that are between $A_{k-1}$ and $A_k$. (In particular, this means that for all $k \in [4]$, for all $v \in A_k \cap V(G'')$, $N_{G''}(v) \cap A_k = \emptyset$.) This decreases $p_k(v)$ for all vertices $v$ with neighbors $w$ such that $d(x_0,v) = d(x_0,w)$ and does not change $p_k(v)$ for all vertices $v$ with no such neighbors. Since 
 
 $$ p_{G''} = \frac{1}{|N_{G''}(x_1)|} \sum_{u \in N_{G''}(x_1)} \frac{1}{|N_{G''}(u)|} \sum_{v \in N_{G''}(u)} \frac{1}{|N_{G''}(v)|} \sum_{w \in N_{G''}(v)} p_4(w)$$
 
 we have that $p_{G''} \le p_{\hat{G'}}$.
\end{proof}

\bigskip

In view of Claims~\ref{c1} and \ref{c2} above, we may assume that $G$ has the following properties: $N_G(x_0) = x_1$, the only edges in $G$ are between $A_{k-1}$ and $A_{k}$ for $k \in [4]$, and $A_k = \emptyset$ for all $k > 4$.

\begin{figure}[h!]
\centering
\includegraphics[scale=.8]{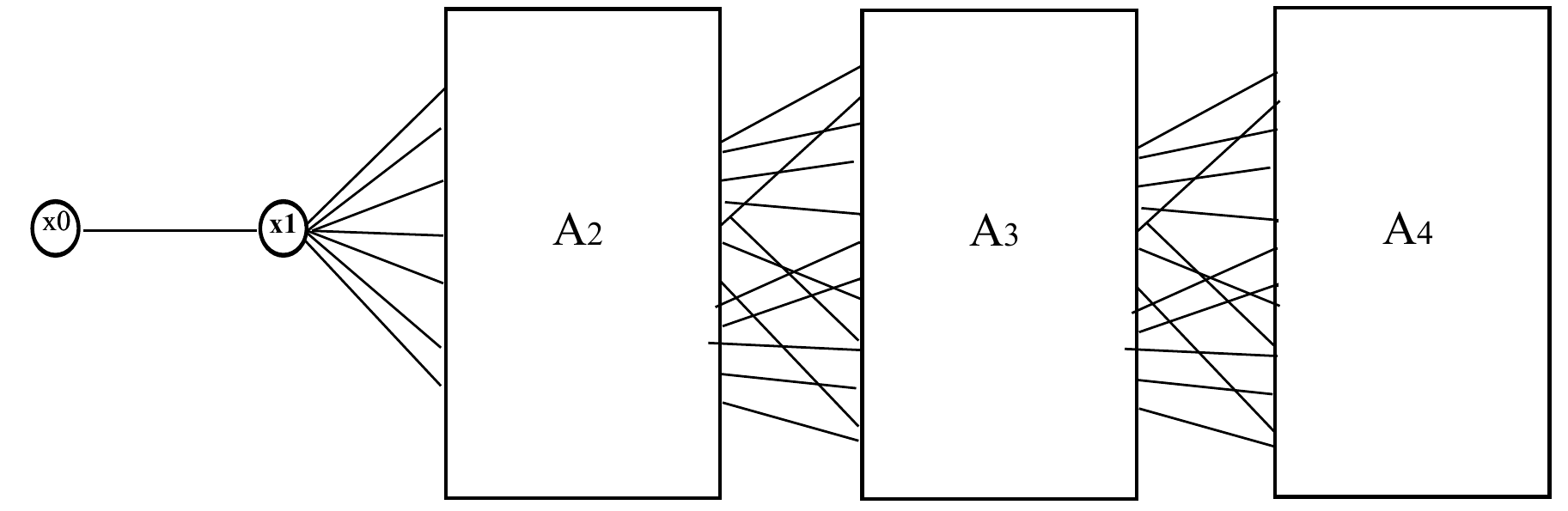}
\caption{Our (alleged) counterexample $G$}
\label{counterexample-4lemma}
\end{figure}

Now define $G_k \subseteq G$ to be the induced subgraph of $G$ on the vertices $A_k \cup A_{k+1}$ and let $e_k = |E(G_k)|$.

\begin{claim}
	\label{avgdeg}
 	$e_2 > s(s-1)$.
\end{claim}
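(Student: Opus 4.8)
The plan is to exploit the extreme rigidity of the reduced graph $G$ guaranteed by Claims~\ref{c1} and \ref{c2}: its vertices split into the BFS layers $A_0=\{x_0\}$, $A_1=\{x_1\}$, $A_2$, $A_3$, $A_4$, and every edge joins two consecutive layers. Since each step of the walk changes the layer index by exactly $\pm 1$, reaching $A_4$ in four steps forces the walk to move ``up'' a layer at every single step. Hence $d(x_0,x_4)=4$ holds if and only if the walk is monotone in layer index, so $p_G = 1-\P(\text{monotone up})$, and the standing assumption $p_G<1/s$ becomes $\P(\text{monotone up})>1-1/s$. Writing this probability as a product of conditional ``up'' probabilities over the four steps (the first being forced), and using that each factor is at most $1$, I conclude that \emph{every} factor exceeds $1-1/s$.

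Two of these factors carry the information I need. The step-$2$ factor is $\P(\text{up}_2)=a_2/(1+a_2)$, where $a_2=|A_2|$, because $x_1$ has as neighbors the single in-neighbor $x_0$ together with all of $A_2$; from $a_2/(1+a_2)>1-1/s$ I get $a_2>s-1$. For the step-$3$ factor, observe that conditioned on ``up at step~$2$'' the walk sits at a uniformly random vertex of $A_2$, and that every $v\in A_2$ has $\deg^-(v)=1$ (its only in-neighbor is $x_1$), so $\deg(v)=1+\deg^+(v)$ and the probability of moving up from $v$ is $\deg^+(v)/(1+\deg^+(v))$. Averaging gives $\frac{1}{a_2}\sum_{v\in A_2}\frac{\deg^+(v)}{1+\deg^+(v)}>1-\frac1s$, which rearranges to the out-degree constraint
\[
\sum_{v\in A_2}\frac{1}{1+\deg^+(v)}<\frac{a_2}{s}.
\]

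The final step converts this into a lower bound on $e_2=\sum_{v\in A_2}\deg^+(v)$. Since $x\mapsto 1/(1+x)$ is convex, for a fixed total out-degree the left-hand sum is smallest when the out-degrees are equal; equivalently, Cauchy--Schwarz (or AM--HM) applied to the $a_2$ positive reals $1+\deg^+(v)$ gives $\sum_v(1+\deg^+(v))\ge a_2^2/\sum_v(1+\deg^+(v))^{-1}>s\,a_2$. Thus $a_2+e_2>s\,a_2$, i.e.\ $e_2>(s-1)a_2$; combined with $a_2>s-1$ and the integrality of $a_2$ (so $a_2\ge s$), this yields the claimed $e_2>s(s-1)$.

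I expect the convexity step to be the crux, and the one place to be careful: the non-stall condition only controls the \emph{average} of $1/(1+\deg^+(v))$, whereas $e_2$ depends on the full out-degree distribution, so a few dead-end vertices (with $\deg^+(v)=0$) must be paid for by correspondingly larger out-degrees elsewhere---which is exactly what convexity encodes. The other delicate point is purely numerical: the two estimates give only $e_2>(s-1)a_2$ with $a_2>s-1$, so extracting the stated $s(s-1)$ rather than $(s-1)^2$ hinges on upgrading $a_2>s-1$ to $a_2\ge s$. Since the eventual contradiction in Lemma~\ref{4lemma} has enormous room to spare (the layer sizes should grow like successive powers of $s=4n^{2/3}$, so $a_4$ will eventually dwarf $n$), this constant is not critical, but I would state the integrality argument explicitly.
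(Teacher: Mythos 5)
Your proof is correct and takes essentially the same route as the paper's: both arguments extract the inequality $\frac{1}{a_2}\sum_{v \in A_2} 1/\deg(v) < 1/s$ from the no-stall assumption (your ``step-3 factor'' is exactly the paper's ``probability of stalling at $A_2$'') and then apply AM--HM (your Cauchy--Schwarz step is the same inequality) to conclude that the average out-degree of $A_2$ exceeds $s-1$, hence $e_2 > (s-1)a_2$. If anything you are more careful than the paper, which silently uses $a_2 \ge s$ here and again in the following claim without ever deriving it, whereas your step-2 factor supplies $a_2 > s-1$ explicitly; the one caveat is that your integrality upgrade to $a_2 \ge s$ only works when $s = 4n^{2/3}$ is an integer, so in general one gets only $e_2 > (s-1)\lceil s-1\rceil \ge (s-1)^2$ rather than $s(s-1)$---a constant-factor slack that, as you note, is harmless for the contradiction in Lemma~\ref{4lemma}.
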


 \begin{proof}
	Now we claim that the average degree of vertices in $A_2$ is greater than $s$: Let $\{d_i\}_1^{a_2}$ be the degrees of the vertices in $A_2$ and let $d = \displaystyle \sum_{i=1}^{a_2} d_i$. The vertex $x_2$ is chosen uniformly at random in $A_2$, and the probability of stalling at a vertex with degree $d_i$ is $\displaystyle \frac{1}{d_i}$. Therefore the probability of stalling at $A_2$ is 
$\displaystyle \frac{1}{a_2} \sum_1^{a_2} \frac{1}{d_i}$. We have 
$$1/s > \frac{1}{a_2} \sum_1^{a_2} \frac{1}{d_i} = \frac{1}{H(\{d_i\})} \geq a_2/d$$
(where $H(\{d_i\})$ is the harmonic mean of the $d_i$). Consequently, $d/a_2 > s$. Thus the average out-degree from $A_2$ is greater than $s-1$, which implies that there are more than $s(s-1)$ edges between $A_2$ and $A_3$.
\end{proof}

\begin{claim}
	Let $B$ be the subset of $A_2$ consisting of vertices with more than half of their outedges going to 	
	$C$, the subset of $A_3$ consisting of vertices with in-degree less than $n^{1/3}$. Let $b 
	= |B|$ and $c = |C|$. Then $b < \frac{1}{2}a_2$.
\end{claim}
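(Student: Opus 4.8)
The plan is to argue by contradiction: suppose $b \ge \tfrac12 a_2$ and derive a contradiction with the standing assumption $p_G < 1/s$ (recall $s = 4n^{2/3}$). By Claims~\ref{c1} and~\ref{c2} we may work in the reduced graph, where $N_G(x_0)=\{x_1\}$, every vertex of $A_2$ has in-degree exactly $1$ (its only in-neighbor being $x_1$, so that $x_1$ is joined to all of $A_2$), and edges run only between consecutive levels. Two easy consequences of $p_G<1/s$ will drive the argument. First, the return $x_1 \to x_0$ at the second step is a stall, occurring with probability $1/(1+a_2)$; since this is at most $p_G<1/s$, we get $a_2 > s-1$, so $A_2$ is already large. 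Second, the event that the walk moves $x_1 \to x_2 \in A_2 \to x_3 = x_1$ is also a stall, with probability $\frac{1}{1+a_2}\sum_{v\in A_2}\frac{1}{1+\deg^+(v)}$; comparing again with $p_G<1/s$ gives $\sum_{v\in A_2}\frac{1}{1+\deg^+(v)} < \frac{1+a_2}{s}$.

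The heart of the proof is to show that $B$ must carry a large amount of out-degree, and hence pour many edges into $C$. A priori $B$ could consist of low out-degree vertices, but the second bound caps the total ``low-degree-ness'' of $A_2$, and hence of any $B\subseteq A_2$. Restricting that sum to $B$ gives $\sum_{v\in B}\frac{1}{1+\deg^+(v)} < \frac{1+a_2}{s}$, so by the arithmetic--harmonic mean inequality (equivalently Cauchy--Schwarz), $\sum_{v\in B}(1+\deg^+(v)) \ge \frac{b^2}{\sum_{v\in B} 1/(1+\deg^+(v))} > \frac{b^2 s}{1+a_2}$. With $b \ge \tfrac12 a_2$ this yields $\sum_{v\in B}\deg^+(v) > \frac{a_2^2 s}{4(1+a_2)} - a_2 = \tfrac14 a_2 s\,(1+o(1))$. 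Since each $v\in B$ sends more than half of its out-edges to $C$, the number of edges from $B$ to $C$ exceeds $\tfrac12\sum_{v\in B}\deg^+(v)$, which is about $\tfrac18 a_2 s$.

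Finally I would bound the number of edges entering $C$ from the other side: every vertex of $C$ has in-degree below $n^{1/3}$, so the total number of edges into $C$ is at most $c\,n^{1/3} \le a_3\, n^{1/3} \le n^{4/3}$. As the edges from $B$ to $C$ form a subset of the edges into $C$, we obtain $\tfrac18 a_2 s\,(1+o(1)) < n^{4/3}$; substituting $s = 4n^{2/3}$ this forces $a_2 < 2n^{2/3}(1+o(1))$, contradicting $a_2 > s-1 = 4n^{2/3}-1$ for all large $n$. The main obstacle is the middle step: one must extract from $p_G<1/s$ a guarantee that a set $B$ of size at least $\tfrac12 a_2$ cannot hide its out-degree, and the harmonic-mean estimate is exactly strong enough to do this without sacrificing the constant factor (a cruder Markov thresholding bound weakens the constant and leaves the final inequality only borderline). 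A secondary point to keep clean is the crude capacity bound $c \le a_3 \le n$, which is precisely what makes $C$ too small to absorb the flood of edges coming from $B$.
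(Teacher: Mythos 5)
Your proof is correct and follows essentially the same route as the paper's: both harvest from $p_G < 1/s$ a bound on $\sum_{v \in B} 1/\deg(v)$ (you via the unconditioned stall event $x_1 \to A_2 \to x_1$, the paper by conditioning on $x_2 \in B$ and using $\P(x_2 \in B) \geq 1/2$), convert it by the AM--HM inequality (your Cauchy--Schwarz step is the same inequality) into a lower bound on the total out-degree of $B$, and then contradict the upper bound on edges entering $C$ that comes from the in-degree cap $n^{1/3}$, the only cosmetic difference being that your final contradiction is phrased as two incompatible bounds on $a_2$ while the paper's is phrased as two incompatible bounds on $e_B$. Two minor remarks: your explicit derivation of $a_2 > s - 1$ from the backtrack stall is a step the paper needs but uses only implicitly (to get $b \geq s/2$), so including it is a genuine improvement in completeness; on the other hand, the $(1+{\rm o}(1))$ bookkeeping and ``for all large $n$'' conclusion should be replaced by exact constants---which do work out here for all $n \geq 3$, with smaller $n$ handled trivially by the backtrack probability---since the claim is asserted for every $n$.
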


\begin{proof} Define $e_B$ to be the number of edges with one endpoint in $B$ and the other in $A_3$. Note that $c \le a_3 < n-a_2 \le n - 4n^{2/3}$ and consequently the number of edges with one endpoint in $A_2$ and the other in $C$ is less than $ n^{4/3} - 4n$. Since more than half of the outedges of each vertex in $B$ terminate in a vertex in $C$, this says that $e_B < 2n^{4/3} - 8n$. 
%
%
%

Now assume, for sake of contradiction, that $b \geq \frac{1}{2} a_2$. Then $\P( x_2 \in B) > 1/2$ so we have
\begin{eqnarray*}
1/s &>& p_G \\
	&=& \P( d(x_0,x_4)< 4 | x_2 \in B) \P( x_2 \in B) + \P(  d(x_0,x_4)< 4 | x_2 \notin B) \P( x_2 \notin B) \\
	&>& \frac{1}{2} \P(  d(x_0,x_4)< 4 | x_2 \in B) 
\end{eqnarray*}

which says that $\P( d(x_0,x_4)< 4 | x_2 \in B) < \displaystyle \frac{2}{s}$.

Let $f = \displaystyle \sum_{i = 1}^b d_i$ where $\displaystyle \{d_i\}_{i=1}^b$ are the degrees of the vertices in $B$. For each vertex in $B$, $\displaystyle \P(d(x_0,x_4)<4| x_2 \in B; \deg(x_2) = d_i) = \frac{1}{d_i}$. Since $x_2$ is chosen uniformly at random, we have
	\begin{eqnarray*}
		\frac{2}{s} > \P(d(x_0,x_4)< 4 | x_2 \in B) &=& \frac{1}{b}\sum_{i = 1}^b \frac{1}{d_i} \\
					&=& \frac{1}{H(\{d_i\}_1^b)} \\
					&\geq & \frac{1}{(1/b)f} = \frac{b}{f} 
	\end{eqnarray*}

The average out-degree from $B$ is $\displaystyle \frac{f}{b} - 1$ and so we get $e_B \geq \displaystyle b \left(\frac{f}{b} - 1\right) \geq \frac{s}{2} \left(\frac{s}{2} - 1\right) = 4n^{4/3}-2n^{2/3}$. This is a contradiction since $2 n^{4/3} - 8n < 4n^{4/3} - 2n^{2/3}$.

Consequently, $b < \frac{1}{2}a_2$.
\end{proof}

\begin{claim}
	 The probability that $x_3 \in A_3 \backslash C$ (given $x_3 \in A_3$) is greater than $1/4$.
\end{claim}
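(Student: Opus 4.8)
The plan is to reduce the claim to a single inequality between two weighted vertex counts over $A_2$, and then to settle that inequality using the harmonic-mean bound from Claim~\ref{avgdeg} together with the preceding claim that $b<\tfrac12 a_2$. Recall from the reductions of Claims~\ref{c1} and~\ref{c2} that we may assume $N_G(x_0)=\{x_1\}$, that edges run only between consecutive levels $A_{k-1}$ and $A_k$, and that $A_k=\emptyset$ for $k>4$. In particular every vertex of $A_2$ is adjacent to $x_1$ and to nothing else in $A_1$, so $\deg^-(v)=1$ and $\deg(v)=1+\deg^+(v)$ for each $v\in A_2$; moreover the first step $x_0\to x_1$ is forced, and $x_2$ is uniform on $N_G(x_1)=\{x_0\}\cup A_2$. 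Since $x_3\in A_3$ forces $x_2\in A_2$ followed by a forward step, and since, conditioned on $x_2=v$ and on that step being forward, $x_3$ is uniform on $N_G(v)\cap A_3$, every quantity in sight can be written explicitly.

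First I would isolate the good event at the level of $x_2$. For $v\in A_2\setminus B$ at most half of the out-edges of $v$ land in $C$, so the conditional probability of reaching $A_3\setminus C$ given $x_2=v$ and a forward step is $g(v)/\deg^+(v)\ge \tfrac12$, where $g(v)=|N_G(v)\cap(A_3\setminus C)|$. Averaging over $v$ and dividing by $\P(x_3\in A_3)$ gives
\begin{equation*}
\P\big(x_3\in A_3\setminus C \mid x_3\in A_3\big)\ \ge\ \tfrac12\,\P\big(x_2\notin B \mid x_3\in A_3\big),
\end{equation*}
so it suffices to prove $\P(x_2\notin B\mid x_3\in A_3)>\tfrac12$. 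Writing $X=\sum_{v\in A_2\setminus B}\deg^+(v)/\deg(v)$ and $Y=\sum_{v\in B}\deg^+(v)/\deg(v)$, the uniformity of $x_2$ makes $\P(x_2\notin B\mid x_3\in A_3)=X/(X+Y)$, and the whole claim collapses to the single inequality $X>Y$.

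To prove $X>Y$ I would bound each side by stripping off the $1/\deg(v)$ corrections. Since $\deg(v)=1+\deg^+(v)$,
\begin{equation*}
X=(a_2-b)-\sum_{v\in A_2\setminus B}\frac{1}{\deg(v)}\ >\ (a_2-b)-\sum_{v\in A_2}\frac{1}{\deg(v)}\ >\ (a_2-b)-\frac{a_2}{s},
\end{equation*}
where the last step is exactly the harmonic-mean estimate $\sum_{v\in A_2}1/\deg(v)<a_2/s$ extracted in the proof of Claim~\ref{avgdeg}; this is the crucial input, since it rules out $A_2\setminus B$ consisting mostly of out-degree-zero (degree-one) vertices. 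On the other side $Y\le b$ trivially, so $X-Y>a_2-2b-a_2/s$.

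The main obstacle is this final squeeze: $a_2-2b-a_2/s>0$ requires $b$ to sit a little below $\tfrac12 a_2$, by a margin of $a_2/(2s)$, whereas the preceding claim as stated gives only the bare inequality $b<\tfrac12 a_2$. I expect to close the gap by exploiting the slack already present in that claim's proof, where assuming $b\ge\tfrac12 a_2$ forced $e_B\ge\tfrac{s}{2}\bigl(\tfrac{s}{2}-1\bigr)\approx 4n^{4/3}$ against the bound $e_B<2n^{4/3}$, a clean factor-of-two contradiction. Re-running that computation with $b\ge\beta a_2$ produces a contradiction for every $\beta$ down to roughly $1/(2\sqrt{2})$, so in fact $b<\tfrac12 a_2-\Theta(a_2/s)$, which is comfortably enough to give $X>Y$ and hence the claim. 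Note that because $g(v)\ge\tfrac12\deg^+(v)$ and $X$ exceeds $Y$ only barely, the resulting bound is essentially $\tfrac14$, matching the statement exactly; there is no slack to spare, which is precisely why the quantitative control on $b$ is unavoidable.
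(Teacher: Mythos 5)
Your proposal is correct, and although it follows the same skeleton as the paper's proof (condition on whether $x_2\in B$, use the defining property of $B$ and $C$, and feed in the bound on $b$), it diverges at the decisive step---and the divergence actually repairs a gap in the paper's own argument. The paper lower-bounds $\P(x_3\notin C\mid x_3\in A_3)$ by the product $\P(x_2\notin B\mid x_2\in A_2)\cdot\P(x_3\notin C\mid x_2\notin B)>(1/2)(1/2)$, which implicitly treats $x_2$ as still uniform on $A_2$ after conditioning on $x_3\in A_3$. As you observe, that conditioning size-biases $x_2$ in proportion to $\deg^+(v)/\deg(v)$, so the relevant quantity is $\P(x_2\notin B\mid x_3\in A_3)=X/(X+Y)$, and the bare inequality $b<\tfrac12 a_2$ from the preceding claim does not by itself force $X>Y$: if $b$ sits within about $a_2/(2s)$ of $\tfrac12 a_2$, the vertices of $A_2\setminus B$ have degrees as small as the harmonic-mean constraint permits, and the vertices of $B$ have very large degrees, then $\frac{X/2}{X+Y}$ drops to $1/4$ or below, so the paper's step is not justified by what has been established. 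Your fix---re-running the proof of the preceding claim with $b\ge\beta a_2$ to obtain a contradiction for any constant $\beta>1/(2\sqrt{2})$ (the collision being $16\beta^2 n^{4/3}$ against the edge-count bound $2n^{4/3}$), hence $b\le\bigl(1/(2\sqrt{2})+o(1)\bigr)a_2$---is exactly the quantitative strengthening needed, and it makes $X>Y$ hold with room to spare; this is a genuinely more careful route than the paper's, at the cost of one extra computation. Two quibbles: the first inequality in your chain for $X$ should be ``$\ge$'' rather than ``$>$'' (equality occurs when $B=\emptyset$, in which case the claim is immediate anyway), and your closing remark that the final bound matches $1/4$ with no slack applies only if one uses the unimproved bound $b<\tfrac12 a_2$; with $b\le\bigl(1/(2\sqrt{2})+o(1)\bigr)a_2$ you in fact get roughly $(1-\beta)/2\approx 0.32$, i.e.\ genuine slack above $1/4$.
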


\begin{proof} 
	If $x_2 \in A_2$ then with probability greater than $1/2$, $x_2 \in A_2 \backslash B$. 
	By definition, more than half of the out-edges of a vertex in $A_2 \backslash B$ terminate in $A_3 \backslash C$, and $x_3$ is chosen uniformly at random from the neighbors of $x_2$. 
	This yields 
	$$\P(x_3 \notin C | x_3 \in A_3) = \P(x_3 \notin C | x_2 \in B) \P(x_2 \in B) + \P(x_3 \notin C | x_2 \notin B)\P(x_2 \notin B)$$

	and therefore
	$$\P(x_3 \notin C | x_3 \in A_3) \geq \P(x_2 \notin B|x_2 \in A_2)\P(x_3 \notin C|x_2 \notin B)> (1/2)(1/2) = 1/4,$$
	as desired.
\end{proof}

	Note that $\P( d(x_0,x_4)< 4 | x_3 \in A_3 \backslash C)  \displaystyle = \frac{\deg^+(x_3)}{\deg(x_3)} > \frac{n^{1/3}}{n}$.
	Therefore the probability of stalling at step 3 is greater than $(1/4) \displaystyle \frac{n^{1/3}}{n} = 1/s$, yielding a contradiction. 
\end{proof}

	Let $j = 4i+1$ be such that the game is in Stage 3 at time $j$, and let $x_j, y_j$ be the positions of the drunk and cop, respectively, after both have moved (so that it is the drunk's turn).
	By Lemma~\ref{4lemma} we have that $d(x_j,x_{j-4}) < 4$ with probability at least $\frac{1}{4}n^{-2/3}$. Consequently, since the cop had $x_{j-4}$ as her target, we now have $d(y_j,x_j) < d(y_{j-4}, x_{j-4})$ (so the distance has decreased by at least 1) with probability at least $\frac{1}{4}n^{-2/3}$.  
	Let $Y_i$ be a random variable which equals the decrease in distance between time $4(i-1)$ and $4i$.
	$Y_i$ is 0 with probability less than $1-1/s$ and is $\geq 1$ with probability at least $1/s$.

	Consider the 0-1 random variable $X_i$ with $\P(X_i=1) = 1/s$ for all $i$ (note $\E[X_i] \le \E[Y_i]$ for all $i$).
	Let $S_n = X_1 + \dots + X_n$, for all $n \in \mathbb{N}$.
	Consider the random process $\{X_i: i \in \mathbb{N}\}$ with the stopping rule that says the process 
		terminates at time $\tau$ if $S_\tau = D_2 - 3$.
	By Wald's identity \cite{Wald}, $ \E[S_\tau] = \E[\tau] \E[X_i]$.
	Since $\E[S_\tau] = \E[D_2] - 3$, we have that the expected stopping time 
	$\E[\tau] = \displaystyle \frac{\E[D_2] - 3}{1/s}$.
	This is the expected number of retargetings needed to get $S_\tau = D_2 - 3$, so we have
	
	$$\E[T_3] \le 4\E[\tau] = 4s(\E[D_2] - 3) < 4((5 \log n)^{3/4} n^{1/4} - 3)(4n^{2/3}) $$

Stage 3 terminates when the distance between the cop and the drunk is less than four, and it is the cop's turn. 
In Stage 4, which terminates when the drunk is captured, the cop uses the greedy strategy, defined as follows.
Suppose that the strategy enters Stage 4 at time $t$, during which time the drunk is at vertex $x_t$ and the cop is about to move from vertex $y_{t-1}$. 
Then $d(x_t, y_{t-1}) \le 3$, and the cop moves such that $d(x_t, y_t) \le 2$. 
Now for any $r > t$, if the drunk moves such that $d(x_r, y_{r-1}) = 3$, the cop can choose $y_r$ to ensure that $d(x_r, y_r) = 2$. 
For each $r$, with probability at least $1/\Delta$, the drunk moves ``toward'' the cop---i.e., such that $d(x_r, y_{r-1}) = 1$; 
if that happens, the cop can choose $y_r = x_r$, capturing the drunk.
This takes at most $\Delta$ expected moves, so $\E[T_4] \le \Delta$ where $T_4$ is the expected time spent in Stage 4.


Adding together our results about the expected time to complete each of the four stages yields the following bound on the expected capture time:
\begin{eqnarray*}
\sum_{i=1}^4\E[T_i] &\le& diam(G) + \E[D_1] + 16n^{2/3} (\E[D_2] - 3) + \Delta \\
			&<& diam(G) + 1 + \sqrt{n(1 + 5\log n)} +  4((5 \log n)^{3/4} n^{1/4} - 3)(4n^{2/3}) + \Delta \\
			&=& diam(G) + \Delta + o(n)
\end{eqnarray*}

In fact, we can bound $diam(G) + \Delta$ with a bit of graph theory.

\begin{lemma}
	\label{diamG}
	For any graph $G$ with $|V(G)| = n$, $diam(G) + \Delta \le n  + 1$. 
\end{lemma}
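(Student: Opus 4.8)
The plan is to bound $n$ from below by exhibiting two large vertex sets that both live inside $V(G)$ and whose overlap is provably tiny. First I would let $x$ be a vertex of maximum degree, so that the closed neighborhood $N[x] := \{x\} \cup N_G(x)$ has exactly $\Delta + 1$ vertices, and let $P = v_0 v_1 \cdots v_d$ be a geodesic realizing the diameter, so that $P$ has $d+1 = diam(G) + 1$ vertices. Since both $V(P)$ and $N[x]$ are subsets of $V(G)$, inclusion--exclusion gives $n \ge |V(P) \cup N[x]| = (diam(G)+1) + (\Delta+1) - |V(P) \cap N[x]|$, so the whole statement reduces to showing that a geodesic cannot meet a single closed neighborhood in more than three vertices.

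That intersection bound is the heart of the argument. Suppose $v_i, v_j \in N[x]$ with $i < j$. Then $d(v_i, x) \le 1$ and $d(x, v_j) \le 1$, so the triangle inequality forces $d(v_i, v_j) \le 2$; but $P$ is a geodesic, so $d(v_i, v_j) = j - i$, whence $j - i \le 2$. Thus all indices of geodesic vertices lying in $N[x]$ fall within a window of width two, i.e.\ there are at most three of them, giving $|V(P) \cap N[x]| \le 3$.

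Combining the two displays yields $n \ge diam(G) + \Delta - 1$, that is, $diam(G) + \Delta \le n + 1$, as claimed. I expect the only genuine obstacle to be the intersection bound, since once it is established the conclusion is pure counting with no case analysis. It is worth noting that the estimate is tight: the path $P_n$ (with $diam = n-1$ and $\Delta = 2$) and the star $K_{1,n-1}$ (with $diam = 2$ and $\Delta = n-1$) both achieve equality, so no slack is being discarded and the crude ``at most three'' cannot be improved in general.
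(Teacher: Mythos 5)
Your proof is correct, and it is essentially the same counting idea as the paper's, but executed more cleanly. Both arguments rest on the observation that a diameter geodesic and the closed neighborhood of a maximum-degree vertex must both fit inside $V(G)$ while overlapping in very few vertices. The paper, however, proceeds by contradiction and splits into two cases according to whether the maximum-degree vertex $v$ lies on some shortest $u$--$w$ path: in the first case it notes that at most two neighbors of $v$ can lie on the geodesic, and in the second it needs a path-surgery argument (if three neighbors of $v$ lay on the geodesic, one could reroute through $v$ to produce a shortest path containing $v$, contradicting the case hypothesis) to reach the same count. Your single claim that $|V(P) \cap N[x]| \le 3$ --- proved directly from the triangle inequality together with the fact that distances along a geodesic equal index differences --- subsumes both of the paper's cases at once, since it is irrelevant whether $x$ itself lies on $P$; the inclusion--exclusion then finishes the proof with no contradiction setup and no case analysis. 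What your route buys is brevity and uniformity; what the paper's route costs is the extra rerouting lemma, which your triangle-inequality window makes unnecessary. Your tightness examples ($P_n$ and $K_{1,n-1}$, both achieving $diam(G)+\Delta = n+1$) are also correct and confirm that neither the lemma nor your intersection bound can be improved in general.
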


\begin{proof}
Assume, for sake of contradiction, that there is a graph $G$ such that $diam(G) >  n - \Delta + 1$.
Let $u,v,w \in V(G)$ be (not necessarily distinct) vertices in $G$ such that $deg(v) = \Delta$ and $d(u,w) = d \geq n - \Delta + 2$.
Now we break this proof into two cases:

{\bf Case 1}: $v$ lies on a shortest path between $u$ and $w$. 

	Let $P_1$ be a shortest $u-w$ path containing $v$.
	At most two neighbors of $v$ may lie on $P_1$, so there are at least $\Delta - 2$ vertices not on $P_1$.
	Since the length of $P_1$ is at least $\geq n - \Delta + 2$, there are at least $n - \Delta + 3$ vertices in $P_1$.
	But now we have that $|V(G)| \geq \Delta - 2 + n - \Delta + 3 > n$, which is a contradiction.

{\bf Case 2}: $v$ is not on any shortest $u-w$ path.

	Let $P_2$ be a shortest $u-w$ path. 
	If more than 2 neighbors of $v$ are in $P_2$, then $v$ is also on a shortest $u-w$ path (let $x_1, x_2,$ and $x_3$ be the neighbors of $v$ on $P_2$, appearing in that order; then the section involving the three neighbors of $v$ could be replaced with $x_1 - v - x_3$ to create another shortest $u-w$ path). 
	Therefore there are at least $\Delta - 1$ vertices not on $P_2$ ($v$ and $\Delta-2$ of its neighbors), and at least $n - \Delta + 3$ vertices on this path. 
	So once again, $|V(G)| \geq \Delta - 1 + n - \Delta + 3 > n$, which is a contradiction.

Therefore $diam(G) \le n - \Delta + 1$ for all graphs $G$. 
\end{proof}

%

Therefore we have the following theorem about the expected capture time.

\begin{theorem}
On a connected, undirected, simple graph on $n$ vertices, a cop with the described four-stage strategy will capture a drunk in expected time $n + o(n)$. 
\end{theorem}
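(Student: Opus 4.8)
The plan is to show that the total capture time $T$ decomposes across the four stages and that the per-stage expectations already established sum to $n + o(n)$. Writing $T = T_1 + T_2 + T_3 + T_4$, linearity of expectation gives $\E[T] = \sum_{i=1}^4 \E[T_i]$ irrespective of how the (random) stage boundaries correlate, so it suffices to control each stage in turn and then add.

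For Stage~1 the cop walks a geodesic to the drunk's starting vertex, so $\E[T_1] \le diam(G)$ holds deterministically. For Stage~2 the cop's travel time is at most the distance $D_1$ the drunk has wandered from its start, so $\E[T_2] \le \E[D_1]$, which Corollary~\ref{keycor} bounds by $1 + \sqrt{n}\sqrt{1+5\log n} = o(n)$. For Stage~3 I would invoke the retargeting analysis: Lemma~\ref{4lemma} guarantees that each four-step block shrinks the cop--drunk distance with probability at least $1/s$, where $s = 4n^{2/3}$, and the Wald-identity computation then yields $\E[T_3] \le 4s(\E[D_2]-3)$; feeding in the bound $\E[D_2] < (5\log n)^{3/4}n^{1/4}$ and checking that the exponent $\tfrac23 + \tfrac14 = \tfrac{11}{12} < 1$ shows this term is $O\bigl(n^{11/12}(\log n)^{3/4}\bigr) = o(n)$. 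Finally $\E[T_4] \le \Delta$, since once the cop is within distance three she can force herself to within distance two and the drunk then blunders onto her within expected time $\Delta$.

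Summing these estimates gives $\E[T] \le diam(G) + \Delta + o(n)$, and the decisive step is to recognize that the only two genuinely linear contributions---$diam(G)$ from Stage~1 and $\Delta$ from Stage~4---cannot both be large at once. This is exactly Lemma~\ref{diamG}, which bounds $diam(G) + \Delta \le n+1$, collapsing the two adversarial terms into a single near-$n$ quantity and completing the argument to $\E[T] \le n + 1 + o(n) = n + o(n)$.

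I expect the main obstacle to be conceptual rather than computational: the estimate would be worthless if one naively bounded Stage~1 and Stage~4 by $n$ each, producing only a $2n$ bound. The crux is therefore Lemma~\ref{diamG}'s observation that a large maximum degree forces a short diameter and vice versa, so that their sum never exceeds $n+1$. Everything else reduces to verifying that the remaining stage costs---$\E[D_1]$, the $16n^{2/3}\E[D_2]$ term, and the sublinear slack---are all $o(n)$, which the exponent bookkeeping above confirms.
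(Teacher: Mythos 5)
Your proposal is correct and follows essentially the same route as the paper: the same four-stage decomposition with $\E[T_1] \le diam(G)$, $\E[T_2] \le \E[D_1]$, the Wald-identity bound $\E[T_3] \le 4s(\E[D_2]-3) = o(n)$, and $\E[T_4] \le \Delta$, capped off by the lemma that $diam(G) + \Delta \le n+1$. You also correctly identify that lemma as the decisive step preventing the naive $2n$ bound, which is exactly the role it plays in the paper.
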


\section{Open Problems}

The reader may, for instance, have noticed that in the ``ladder to the basement'' example of Section~\ref{prelims}, we considered a cop who was not only greedy but also rather insistently foolish. What about the greedy cop who makes distance-minimizing decisions at random? The ``ladder to the basement'' graph is no longer a problem for her, 
(the expected capture time in this example is now less than $n$). Is it possible that the greedy algorithm with disputes settled by a random decision between choices is enough to guarantee time $n+o(n)$?

It is also possible that a deterministic greedy cop who breaks ties by considering her distance to vertices previously occupied by the drunk will capture in expected time at most $n + o(n)$.

An alternative greedy strategy, suggested by Andrew Beveridge~\cite{Bev}, concerns itself with minimizing the drunk's expected hitting time to the cop at every step. It would be interesting to see if this strategy also has expected capture time at most $n + o(n)$.

\section{Acknowledgments}

This work has benefited from conversations at Microsoft Research, in Redmond, Washington, with Omer Angel, Ander Holroyd, Russ Lyons, Yuval Peres, and David Wilson.

\end{document}